\newcommand{\ds}{\displaystyle}
\newcommand{\Beta}{\operatorname{Beta}}
\newtheorem{theo}{Theorem}[section]
\newtheorem{cor}[theo]{Corollary}
\newtheorem{prop}[theo]{Proposition}
\newtheorem{proposition}[theo]{Proposition}
\newtheorem{lem}[theo]{Lemma}
\begin{document}
\begin{frontmatter}

\title{The fixation line in the ${\Lambda}$-coalescent}
\runtitle{The fixation line}

\begin{aug}
\author[A]{\fnms{Olivier}~\snm{H\'enard}\corref{}\thanksref{T1}\ead[label=e1]{o.henard@qmul.ac.uk}}
\runauthor{O. H\'enard}
\affiliation{Queen Mary University of London}
\address[A]{School of Mathematical Sciences\\
Queen Mary University of London\\
Mile End Road\\
London E1 4NS\\
United Kingdom \\
\printead{e1}}
\end{aug}
\thankstext{T1}{Supported by the DFG Priority Programme
SPP 1590 ``Probabilistic Structures in Evolution''.}

%
\received{\smonth{8} \syear{2013}}
%
\revised{\smonth{9} \syear{2014}}

%
\begin{abstract}
We define a Markov process in a forward population model
with backward genealogy given by the $\Lambda$-coalescent. This Markov process,
called the fixation line,
is related to the block counting process through its hitting times.
Two applications are discussed.
The probability that the $n$-coalescent is deeper than the $(n-1)$-coalescent is studied.
The distribution of the number of blocks in the last coalescence
of the $n$-$\operatorname{Beta}(2-\alpha,\alpha)$-coalescent
is proved to converge as $n \rightarrow\infty$, and the generating
function of
the limiting random variable
is computed.
\end{abstract}

%
\begin{keyword}[class=AMS]
\kwd{60J25}
\kwd{60G55}
\kwd{60J80}
\end{keyword}
\begin{keyword}
\kwd{Coalescent}
\kwd{Markov chain duality}
\kwd{hitting times}
\end{keyword}
\end{frontmatter}

\section{Introduction}
\label{secfixation}

The $n$-coalescent is a stochastic model for the genealogy of a
(haploid) population of $n$ individuals
backward in time. In this model, the individuals of the population are
identified with the
integers of the set $\{1,\ldots,n\}$, and the $n$-coalescent
takes its values in the partitions of $\{1,\ldots,n\}$.
A partition of $\{1,\ldots,n\}$ is composed of a certain number of
blocks, between $1$ and $n$.
The initial state of the $n$-coalescent is the partition in $n$ blocks,
that is, the partition in singletons,
$\{1 \}, \ldots, \{n\}$. Any particular set of $k$ blocks then merges
independently
in one block at rate given by
%
\begin{equation}
\label{ratecoalescent0} \int_{[0,1]} \Lambda(dx) x^{-2}
x^{k} (1-x)^{n-k},
\end{equation}
where $\Lambda(dx)$ is a probability measure on $[0,1]$.
After the first coalescence, again, any particular set of $k$ blocks
merges independently
in one block at rate given by~(\ref{ratecoalescent0}), with $n$ replaced
by the current
numbers of blocks. The procedure is then repeated, until
the process terminates at the partition in one single block,
$\{1, \ldots, n\}$.
\textit{The first motivation of this paper is to study the number of
blocks involved in the last coalescence}.

The interpretation of the model is the following: two integers are in
the same block of the partition at time $t \geq0$ in the
$n$-coalescent if the corresponding individuals found their common
ancestor at time $t$ backward in time.
At the time of the last coalescence, all the individuals found their
common ancestor.

The blocks in a partition of $\{1,\ldots,n\}$ may be ordered according
to their smallest element.
With this ordering, the $n$-coalescent is described by a family
$(B_i(t), t \geq0, i \in\mathbb{N})$, where $B_1(t)$ is the block containing
$1$ at time $t$, $B_2(t)$ is the block containing the smallest integer
not in $B_1(t)$ at time $t$
(if any) $\ldots\,$. The largest~$i$ such that $B_i(t)$ is nonempty is
the number of blocks in the $n$-coalescent,
denoted by $X^{n}(t)$. For instance, for the $7$-coalescent depicted on
Figure~\ref{picture00}, we have $X^7(s)=5$ and:
\begin{eqnarray*}
B_1(s)&=& \{1,2,5\}, \qquad B_2(s)=\{3\},\qquad
B_3(s)=\{4\}, \qquad B_4(s)=\{6\} \quad\mbox{and}\\
B_5(s) &=& \{7\}.
\end{eqnarray*}

Alternatively, we may view the $n$-coalescent as a family of
(coalescing) maps. For each $i \in\{1,\ldots,n\}$, and $t \geq0$,
there exists a unique integer $j$ such that $i \in B_{j}(t)$ and we
then set $i(t)=j$. The random map $t \mapsto i(t)$ is nonincreasing,
starts at $i(0)=i$ and terminates at $1$. Furthermore, if two functions
$i(t)$ started at different points meet, then they coincide at each
further time. Figure~\ref{picture00} describes the collection of the
maps $i(t)$ started at $i \in\{1,\ldots,7\}$. Notice that a function
$i(t)$ not only decreases when the block labelled $i(t)$ takes part in
a coalescence, but also when at least two blocks with lower label take
part in a coalescence.

\begin{figure}

\includegraphics{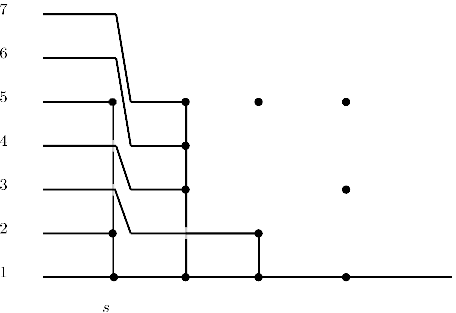}

\caption{A sample of the $7$-coalescent. The first coalescence,
at time $s$, has $j_1,j_2,j_3$ equal to $1,2,5$, respectively. The set
of records satisfies $\mathcal{T}\cap\{1, \ldots, 7\}=\{2,3\}$.}
\label{picture00}
\end{figure}

It is possible to couple the $n$-coalescents for distinct values of
$n$, in such a way that the $n$-coalescent is the restriction of the
$(n+1)$-coalescent to the first $n$ integers (this is shown by the
Poisson construction below). This coupling, that we will call the
natural coupling, allows to define the coalescent started from an
infinite number of blocks, simply called the coalescent, or the
$\Lambda$-coalescent if there is a need to stress on the measure $\Lambda(dx)$.

When adding more and more functions $t\rightarrow i(t)$ in the natural coupling,
the block counting process $(X^n(t), t \geq0)$ of the $n$-coalescent evolves,
and we may think of it as a wave moving to the right.
The motion to the right, measured by the depth $\tau_{1}^n= \inf\{ t\geq0, X^{n}(t)=1 \}$ of the $n$-coalescent,
is either a.s. bounded,
or a.s. unbounded---we come back to this fact in Section~\ref{subkeylemma}.
In both cases, we investigate the question of the existence
of a limiting shape (in distribution in the second case) for the wave
viewed from the right.
This amounts to study the time-reversal of the block counting process
for the coalescent
started from $n$ blocks as $n \rightarrow\infty$, a slight
elaboration on our
first motivation.

The depth of the $n$-coalescent
$\tau_{1}^n$ corresponds to the first time the $n$ blocks have merged
in $1$ block.
In the aforementioned natural coupling, we may consider the random
subset of the integers
\[
\mathcal{T}= \bigl\{n \geq2, \tau_{1}^n>
\tau_{1}^{n-1} \bigr\},
\]
that will be called the set of records: the integer $n$ belongs to the
set of records $\mathcal{T}$ when the function $i(t)$ started at $n$ reaches
$1$ at some later time than the functions $i(t)$ started at lower
values $i$ for $2 \leq i \leq n-1$,
thus establishing a new record. See Figure~\ref{picture00} for an illustration.
Since $\tau_{1}^{1}=0$ by definition, we have that $2 \in\mathcal{T}$ a.s.
In terms of population genetics,
the label of an individual corresponds to a record if its addition in
the sample modifies the most recent common ancestor
of the sample.
\textit{The study of the set of records is our second motivation}.

The lookdown model was introduced by Donnelly and Kurtz \cite{DO99}.
It is
essentially a time-reversal of the coalescent viewed as a family of
coalescing maps; see Figure~\ref{picture00}. Its construction echoes
the Poisson construction of the $n$-coalescent, and we first introduce
this construction.

Assume that $\Lambda\{0\}=0$. The construction starts with a Poisson
point measure
on $\mathbb R^+ \times(0,1]$, with intensity $dt \Lambda(dx) x^{-2}$.
To each atom $(t,x)$ of this random measure, we associate a random
subset
\[
J=\{j_1,j_2,j_3, \ldots\}
\]
of the set of integers $\mathbb N$ by sampling each integer
independently with the same probability $x$.
Then the blocks with labels in $J$ at time $t-$ coalesce in one block.
Notice that, among the (possibly infinitely many) atoms $(t,x)$ on a
finite time interval, only a finite number give rise to an effective
merge in the $n$-coalescent, and so we may distinguish a first
coalescence, a second one, etc.
The reader may check that this construction produces a Markov process
with transition rates given by (\ref{ratecoalescent0}).


The lookdown model starts with a family, indexed by a time $t \geq0$,
of countably many individuals distinguished by their integer-valued
level. The individual at time~$t$ at level $i \in\mathbb{N}$ is
denoted by
$(t,i)$. The lookdown model describes the \textit{genealogical
relationships} between the individuals at distinct times. Its
construction starts from the same Poisson point measure
on $\mathbb R^+ \times(0,1]$, with intensity $dt   \Lambda(dx) x^{-2}$.
A random set
\[
J= \{ j_1,j_2,j_3, \ldots\}
\]
is associated with each atom $(t,x)$ by sampling independently each
integer with the same probability $x$.
To each atom $(t,x)$, there corresponds a reproduction event and:
\begin{itemize}
\item for $j$ in $J$, the individual $(t,j)$
is a
child of
the individual $(t-, \min J)$. Notice that $J$, and therefore the set
of children, is infinite;
\item the other lineages are shifted upward, keeping the order
they had before the birth event: if
$k \notin J$, the individual $(t,k)$ is the (unique) child
of the individual $(t-,k-k')$, where
$k'= (\operatorname{Card}\{ J \cap\{1, \ldots,k\} \} -1) \vee0$,
see Figure~\ref{picture01}.
\end{itemize}
The real number $x$ will be called the asymptotic frequency of the
reproduction event.
The ancestral lineage of the individual $(t,i)$ is the line composed of
the individuals $((s,j(s)), 0 \leq s \leq t)$ where $j(s)$ is the level
of the ancestor of the individual $(t,i)$ at time $s$.
Figure~\ref{picture01} displays the collection of the ancestral lineages.
The connection between the two models is the following.
For each $t \in\mathbb R^+$, the ancestral lineages of the individuals
at the $n$ lowest levels at time $t$
define a process valued in the partitions of $\{1,\ldots, n\}$:
$i$ and $j$ are in the same block at time $s$, $0 \leq s \leq t$, if
the individuals $(t,i)$ and $(t,j)$ share
a common ancestor at time $t-s$.
It should be clear from Figure~\ref{picture01} that this partition
valued process has the law of (the restriction
to $[0,t]$) of the $n$-coalescent.

\begin{figure}

\includegraphics{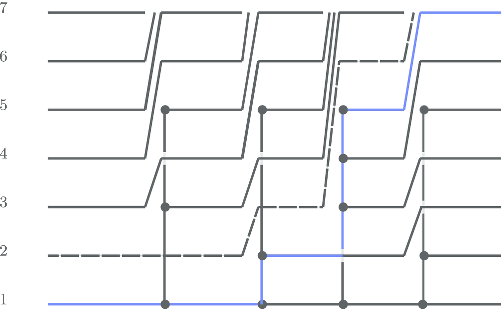}

\caption{The ancestral lineages in a look-down graph restricted to its
first $7$ levels.
The first reproduction event has $J= \{1,3,5,\ldots\}$.
The fixation line started at level $1$ at time $0$ is blue.
The dotted line above is the translation by one level of the fixation line.}
\label{picture01}
\end{figure}

The levels of the offspring at time $t \geq0$ of the individual
$(0,2)$ form a subset of $\mathbb N$,
the minimal element we define to be $L_1(t)+1$.
If the subset of $\mathbb N$ is empty, then we set $L_1(t)=\infty$.
The collection of the random variables $(L_1(t), t \geq0)$ builds a
nondecreasing process called \textit{the fixation line}, that is, the
blue line
in Figure~\ref{picture01}.
The shift by $1$ in the definition is for technical reason.
Alternatively, the levels of the offspring at time $t \geq0$ of the
individual $(0,1)$ form a subset of $\mathbb N$,
the connected component including $1$ is
$\{1, \ldots, L_1(t) \}$ [with, again, the convention that
$L_1(t)=+\infty$ if this subset is $\mathbb{N}$].
At the time the fixation line $L_1$
reaches level~$n$, the whole population of individuals at level
$1,\ldots,n$ consists
of offspring of the individual $(0,1)$, an event called fixation in
population genetics, whence the name
fixation line. The link between the fixation line and the set of
records $\mathcal{T}$ is the following:
for each $t \geq0$, $L_1(t)+1$ belongs to the set of records $\mathcal{T}$ for
the coalescent describing the genealogy of the individuals at time $t$.
See Figure~\ref{picture01} for an example.

This is a review of the literature:
the origin of the fixation line may be traced back to Pfaffelhuber and
Wakolbinger \cite{PW06} in the Kingman case
$\Lambda(dx)=\delta_0(dx)$. For general $\Lambda(dx)$, it appeared in
Labb\'e \cite{L11} and in \cite{H13}.
The coalescent we will focus on is the $\Lambda$-coalescent, that was
introduced independently and simultaneously
by Donnelly and Kurtz \cite{DO99}, Pitman \cite{PI99} and Sagitov
\cite{SA99}. Lecture notes have been written by Berestycki \cite{BE09} and
Bertoin \cite{BE06}, and the research area has been recently surveyed
in Gnedin, Iksanov and Marynynch \cite{GIM14}.
We find the fixation line useful in studying two random quantities
defined in the coalescent:
the set of records $\mathcal{T}$ and the number of blocks implied in
the last
coalescence of the $n$-coalescent.
An integer $n$ is a record when the corresponding external branch in
the coalescent tree has
depth equal to that of the $n$-coalescent tree.
In that sense, our analysis of the set of records may be seen as an
atypical view on the intensively studied
external branches; see Caliebe et al. \cite{CN07}, Dhersin and M\"ohle
\cite{DM13} and the references therein.
The numbers of blocks implied in the last coalescence, as well as
the closely related hitting probabilities of the block counting processes,
are quantities that relate to the coalescent tree near the root.
This part of the tree is difficult to grasp from the standard
construction of the coalescent.
Original techniques have been developed in the papers
\cite{AD13+,AD13,BBS08,GM05,K12,MO13}
to circumvent this difficulty.
Among these papers, the ones with the closest objectives to ours are
Abraham and Delmas \mbox{\cite{AD13+,AD13}}
and Goldschmidt and Martin \cite{GM05} that both
use a connection to a specific class of random trees.
The papers \cite{GM05,MO13} are concerned with
the Bolthausen--Sznitman coalescent, which is the $\Beta(2-\alpha,\alpha)$-coalescent
with $\alpha=1$, whereas
\cite{AD13+,AD13} deal with the $\Beta(2-\alpha,\alpha)$-coalescent
for $\alpha\in(0,1/2]$.
The approach that we propose allows us to deal with the
whole family of parameters $\alpha\in(0,2)$.
It should be
pointed out that the reason that makes it possible
to derive the distribution of the number of blocks in the last
coalescence of the $\Beta(2-\alpha,\alpha)$-coalescent is a connection
that exists between this coalescent and the trees with an
$\alpha$-stable branching mechanism, although this connection will not be
explicitly mentioned nor used. The connection originated in \cite{BI05}
and is usually stated in the framework of continuous state branching
processes. Here, we give a more ``discrete'' account. We point out this
is yet another connection, with trees that have a $1/(1-\alpha)$-stable
branching mechanism for $\alpha\in(0, 1/2]$, that is used by Abraham
and Delmas \mbox{\cite{AD13+,AD13}}.
Short after the preprint for this paper appeared, M\"ohle extended in
\cite{MO13+} the analytic method of \cite{MO13} to
the $\Beta(2-\alpha,\alpha)$-coalescent, and obtained some of the
results presented in this paper.
We warmly invite the reader to consult this paper as a parallel reference.

This is the organization of the paper:
Section~\ref{sec1} contains: a key lemma relating the depth of the $n$-coalescent to
the hitting times of the fixation line; see Lemma~\ref{tau-alpha}; the computation of the transition rates of the fixation line (these
two first points are in the general $\Lambda$ setting); the factorization of the rates in the $\Beta(2-\alpha,\alpha)$-coalescent.
These three ingredients combine in Section~\ref{sec2} to answer the
questions introduced above.
Namely, we first compute in Section~\ref{sec21} the probability for an integer
to be in
the random set of records $\mathcal{T}$.
Second, we characterize in Section~\ref{sec22} the time-reversal of the block
counting process.
The problem reduces to an analysis of
the number of blocks implied in the last coalescence and our main
result, Theorem~\ref{propconv-Delta},
is a limit theorem for the law of this random variable in the case of
the $n$-$\Beta(2-\alpha, \alpha)$-coalescent.
Corollary~\ref{corhitting} is a reformulation in term of
the hitting probabilities of an integer $j$ by the block counting
process (the $j \rightarrow\infty$ asymptotics of these hitting probabilities
are also computed).
Last, we connect the (discrete) Neveu branching process
to the Bolthausen--Sznitman coalescent, and deduce the fluctuations of
the depth of this coalescent.

\section{The fixation line}
\label{sec1}

\textit{Assumption}: we will assume throughout that the probability
measure $\Lambda(dx)$ gives no mass to the singletons
$0$ and $1$,
\[
\Lambda\{0\}= \Lambda\{1\}=0.
\]
The assumption on $\Lambda\{0\}$ allows to rely on the simple Poisson
construction of the coalescent mentioned in the
\hyperref[secfixation]{Introduction}
without struggling to include binary
coagulations. This being said, most of the results are still valid for
a probability measure $\Lambda(dx)$ with an atom at $0$.
The assumption on $\Lambda\{1\}$ avoids an uninteresting case.

\subsection{A key lemma on hitting times, and coming down from infinity}
\label{subkeylemma}
We first come back to the definition of the fixation line and
generalize it slightly by allowing the fixation line
to be started at an arbitrary integer.
Fix an integer $j$ and consider the set of individuals $(0,i)$ at time
$0$ at level $i$, for $i \in\{1,\ldots,j\}$.
The offspring of this set of individuals at time $t \geq0$ forms a
subset of $\mathbb N$
in the lookdown model mentioned in the \hyperref[secfixation]{Introduction}, the connected
component including $1$ we denote by
\[
\bigl\{1, \ldots, L_j(t) \bigr\},
\]
with $L_j(t)=\infty$ if this set is $\mathbb{N}$.
Alternatively, the offspring at time $t$ of the \textit{single}
individual $(0,j+1)$
forms a subset of $\mathbb N$, the \textit{smallest} element of which
is \mbox{$L_j(t)+1$}.

For $j$ and $n$ integers, we set
%
\begin{equation}
\label{eqdeftaualpha} \tau_{j}^n=\inf\bigl\{t \geq0,
X^n(t) \leq j\bigr\}, \qquad \alpha_{j}^n=\inf
\bigl\{t\geq0, L_j(t) \geq n\bigr\}
\end{equation}
to denote
the (partial) depth of the $n$-coalescent
and the hitting time of the fixation line,
respectively.
We now state our key lemma. In fact, the whole paper may be seen as a
digression on this relation.

\begin{lem}
\label{tau-alpha}
Let $1 \leq j \leq n$. The two random variables $\tau_{j}^n$ and
$\alpha_{j}^n$ have the same distribution.
\end{lem}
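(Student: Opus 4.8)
The plan is to fix $t \ge 0$ and reduce the equality in law to a single identity read off the lookdown construction. Since $s \mapsto X^n(s)$ is non-increasing with $X^n(0)=n$ while $s \mapsto L_j(s)$ is non-decreasing with $L_j(0)=j$, one has $\{\tau_{j}^n \le t\}=\{X^n(t)\le j\}$ and $\{\alpha_{j}^n \le t\}=\{L_j(t)\ge n\}$. Hence it suffices to prove that
\[
\P(X^n(t)\le j)=\P(L_j(t)\ge n) \qquad \text{for every } t \ge 0 .
\]

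To compare the two sides I would realize the coalescent inside the lookdown. For $i \in \{1,\ldots,n\}$ let $a(i)$ denote the level at time $0$ of the ancestor of the individual $(t,i)$. By the construction recalled in the introduction, the backward genealogy of the $n$ lowest levels at time $t$ is an $n$-coalescent run for time $t$, so that $X^n(t)$ has the law of the number $D\equal\Card\{a(1),\ldots,a(n)\}$ of distinct ancestors. On the fixation-line side, $L_j(t)\ge n$ means exactly that the levels $1,\ldots,n$ at time $t$ all lie in the offspring of $(0,1),\ldots,(0,j)$, that is $a(i)\le j$ for all $i\le n$, i.e. $M\le j$ with $M\equal\max_{i\le n}a(i)$.

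The crux is the pathwise identity $D=M$, equivalently that $\{a(1),\ldots,a(n)\}$ is the full interval $\{1,\ldots,M\}$ of $\N$. This should follow from two structural features of the lookdown, both already implicit in the equivalence of the two definitions of $L_j$ given above. First, no lineage ever dies: at each reproduction event the map sending a level at time $t$ to the level of its parent at time $t-$ is onto $\N$, so every individual has at least one descendant at every later time. Second, leftmost descendants keep their order: if $a<b$ then the smallest offspring of $(0,a)$ at time $t$ is smaller than that of $(0,b)$, because on the complement of each sampled set $J$ the upward shift map is non-decreasing. Granting these, choose $i_0\le n$ with $a(i_0)=M$; for any $b\le M$ the smallest offspring of $(0,b)$ then sits at a level at most that of the smallest offspring of $(0,M)$, itself at most $i_0\le n$. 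Thus each $b\in\{1,\ldots,M\}$ is the ancestor of some $(t,i)$ with $i\le n$, so every such $b$ occurs among the $a(i)$ and indeed $D=M$.

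Combining the three steps, $\P(X^n(t)\le j)=\P(D\le j)=\P(M\le j)=\P(L_j(t)\ge n)$, which is the identity sought, and the equality in distribution of $\tau_{j}^n$ and $\alpha_{j}^n$ follows. The hard part will be the crux, namely verifying the survival of all lineages and the order preservation of leftmost descendants directly from the explicit description of the upward shifts; everything else, the monotonicity reduction and the identification of $\{L_j(t)\ge n\}$ with $\{M\le j\}$, is immediate from the definitions.
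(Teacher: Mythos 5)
Your proof is correct and takes essentially the same route as the paper: both fix $t$ and identify, within the lookdown coupling, the event $\{X^n(t)\le j\}$ (equivalently $\{\tau_{j}^n\le t\}$) with the event $\{L_j(t)\ge n\}$ (equivalently $\{\alpha_{j}^n\le t\}$), comparing the fixation line started at $j$ with the backward genealogy of the $n$ lowest levels at time $t$. The only difference is that you make explicit the pathwise identity $D=M$ — that the time-$0$ ancestor levels of the $n$ lowest individuals at time $t$ form the initial interval $\{1,\ldots,M\}$, which you correctly derive from survival of all lineages and order preservation of leftmost descendants in the lookdown — a point the paper's two-line proof treats as immediate from the construction.
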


\begin{pf}
Fix $t>0$. It is enough to observe that the events
\[
\bigl\{ \tau_{j}^n >t \bigr\} \quad \mbox{and}\quad \bigl
\{ \alpha_{j}^n >t \bigr\}
\]
are equal in the coupling of a coalescent and a fixation line provided
by the lookdown model.
More precisely, we compare the fixation line started at level $j$ at
time~$0$ and the
coalescent describing the
backward genealogy of the $n$ lowest level
individuals at time $t$.
For the inclusion, observe that,
if $\tau_{j}^n >t$ for the coalescent, then the fixation line started
at $j$ at time $0$
has not reached $n$ at time $t$, that is $\alpha_{j}^n >t$.
For\vspace*{1pt} the reverse inclusion, if
$\alpha_{j}^n >t$, then the $n$-coalescent has more than $j$
blocks at time $t$, that is time $0$ in the lookdown model, and $\tau
_{j}^n >t$.
\end{pf}

Either the increasing sequence of the expected depths of the $n$-coalescent
%
\begin{equation}
\label{CDI}
\bigl(\mathbb{E}\bigl(\tau_1^n\bigr), n
\geq1\bigr)
\end{equation}
is bounded, or it goes to $\infty$.
In the first case, the coalescent is said to \textit{come down from
infinity}, whereas in the second case, it is said to \textit{stay infinite}.
These two classes of coalescent enjoy the following properties:
A coalescent that comes down from infinity does so immediately: not
only the increasing sequence of the depths stays bounded,
$\lim_{n \rightarrow\infty} \tau_1^n < \infty$ a.s., but also the
number of
blocks at each positive time remains bounded,
that is, $\lim_{n \rightarrow\infty} X^n(t) < \infty$ a.s. for $t>0$
(recall we work under the assumption $\Lambda\{1\}=0$).
For a coalescent which stays infinite, however, the increasing sequence
of the depths diverges, $\lim_{n \rightarrow\infty} \tau_1^n =
\infty$ a.s.,
and also the number of blocks at each nonnegative time diverges,
$\lim_{n \rightarrow\infty} X^n(t) = \infty$ a.s. for $t \ge0$. We
refer to
Pitman \cite{PI99} for the proof of these facts.

We take the opportunity to point out that, when the increasing sequence
$\tau_1^n$ has bounded moments, it automatically also has small uniform
exponential moments. The following simple argument may be found in
Limic \cite{L12}.
When the coalescent comes down from infinity, the depth $\tau_1:= \lim_{n \rightarrow\infty} \tau_1^n$ of the infinite coalescent
satisfies $p:=\mathbb{P}
(\tau_1 < 1 )>0$. Using the natural coupling of the $n$-coalescents for
the first inequality, and a finite induction on $t$ (that uses the same
coupling) for the second inequality, we obtain
\[
\mathbb{P}\bigl(\tau_1^n \ge t\bigr) \leq\mathbb{P}(
\tau_1 \ge t ) \leq (1-p)^t \qquad \mbox{for } t \in
\mathbb{Z}^+.
\]


From Lemma~\ref{tau-alpha} and the discussion above, the coalescent
comes down from infinity when the increasing sequence $(\alpha_1^n, n
\geq1)$ is a.s. bounded, that is, when the fixation line reaches
$\infty$ in finite
time a.s. Also, the coalescent stays infinite when
the increasing sequence $(\alpha_1^n, n \geq1)$ goes to $\infty$ a.s.,
that is, when the fixation line remains finite for all finite time a.s.

Last, we mention there is a simple criterion due to Schweinsberg \cite{SC01} that involves the probability measure $\Lambda(dx)$, for
discriminating between the two alternatives (coming down from infinity,
or staying infinite). There are examples of coalescent in both classes.

\begin{rem}
It would be interesting to find an analogous criterion on $\Lambda(dx)$
to discriminate between converging and diverging sequences
\[
\bigl(\operatorname{Var}\bigl(\tau_1^n\bigr), n \geq1\bigr).
\]
A first look at the Bolthausen--Sznitman coalescent suggests this
criterion should be distinct from the Schweinsberg criterion.
\end{rem}

\subsection{The transition rates}

Our next task is to determine the transition rates of the fixation line.

\begin{lem}
\label{propgamma}
For $1 \leq i<j$, the rate $\tilde{\Gamma}_{i,j}$ at which a fixation
line $(L(t), t \geq0)$ goes from $i$ to $j$ is
%
\begin{equation}
\label{defGammaij}
\hspace*{16pt}\quad\tilde{\Gamma}_{i,j}= \pmatrix{j \cr j-i+1} \int
_{[0,1]} \Lambda(dx) x^{-2} x^{j-i+1}
(1-x)^{i},\qquad  1 \leq i < j < \infty.
\end{equation}
\end{lem}

\begin{pf}
A fixation line jumps from $i$ to $j$ when, at the time of a
reproduction event,
$j-i+1$ levels exactly are chosen among the levels $1, 2, \ldots, j$, and
the level $j+1$ is not chosen.
For\vspace*{1pt} a reproduction event with asymptotic frequency $x$,
this has probability
$x^{j-i+1}   (1-x)^{i}$ for any unordered set with $j-i+1$ elements in
$\{1,\ldots, j\}$.
Counting the number of such sets, and integrating
with respect to the ``law'' of the asymptotic frequency $x$ gives the formula.
\end{pf}

The quantity $\tilde{\Gamma}_{i,j}$ should be compared with the rate
$\Lambda_{j,i}$ at which the block counting process
of the $n$-coalescent $(X^{n}(t), t \geq0)$ jumps from $j$ to $i$:
%
\begin{eqnarray}
\label{deflambdaij}
\Lambda_{j,i} &=&  \pmatrix{ j \cr j-i+1 } \int
_{[0,1]} \Lambda(dx) x^{-2} x^{j-i+1}
(1-x)^{i-1},
\nonumber
\\[-8pt]
\\[-8pt]
\eqntext{1 \leq i < j < \infty.}
\end{eqnarray}
Unlike the transitions of the block counting process, which involve (a
mixture of) binomial distributions, the transitions
of the fixation line therefore involve (a~mixture of) negative binomial
distributions.
The two quantities $\tilde{\Gamma}_{i,j}$ and $\Lambda_{j,i}$ differ in
general, still we have the following relationship.

\begin{lem}
\label{lemcombi}
For $i < j$, the rate $\tilde{\Gamma}_{i, \geq j}$ at which a fixation
line jumps from $i$ to a level ${\geq}j$ is
equal to the rate $\Lambda_{j,\leq i}$ at which the block counting
process jumps from $j$ to ${\leq}i$ blocks:
%
\begin{equation}
\label{hypergeom2}
\tilde{\Gamma}_{i, \geq j} = \Lambda_{j,\leq i}.
\end{equation}
\end{lem}

A computational proof is given in the \hyperref[app]{Appendix}.
For another instance of such a duality relationship, formulated in the
framework of measure valued process, we refer the reader to Lemma~5,
page 282 of Bertoin and Le Gall \cite{BLG03}.

The claim (\ref{hypergeom2}) may also be justified directly as follows:
a fixation line jumps from $i$ to a level ${\geq}j$ when, at the time
of a
reproduction event, at least $j-i+1$ levels are chosen among the levels
$1, 2, \ldots, j$, without any condition on
the level $j+1$. The same event backward corresponds to a coalescence
from $j$ blocks to ${\leq}i$ blocks.

Setting $j=i+1$ in (\ref{hypergeom2}), we obtain that the total rate at
which the fixation line jumps up from $i$
is equal to the total rate at which the block counting process jumps
down from $i+1$:
%
\begin{equation}
\label{hypergeom3}
\tilde{\Gamma}_{i, \geq i+1} = \Lambda_{i+1,\leq i},
\end{equation}
two quantities that we simply denote by $\Lambda_{i+1}$ in the following.

\subsection{The \texorpdfstring{$\Beta(2-\alpha,\alpha)$}{Beta(2-alpha,alpha)} family}

The Beta($2-\alpha,\alpha$) family of probability measures is given,
for $0< \alpha<2$, by
%
\begin{eqnarray}
\Lambda(dx) &=&  \Beta(2-\alpha,\alpha) (dx)
\nonumber
\\[-8pt]
\label{betafamily}
\\[-8pt]
\nonumber
&=&
\frac{1}{\Gamma
(2-\alpha
) \Gamma(\alpha)} x^{1-\alpha} (1-x)^{\alpha-1} \mathbf{1}_{[0,1]}(x)\,dx.
\end{eqnarray}
The $\Beta(2-\alpha,\alpha)$-coalescent interpolates between the star
like coalescent, which corresponds to the limit case
$\alpha=0$, the Bolthausen--Sznitman coalescent, $\alpha=1$, and the
Kingman coalescent, which corresponds to the limit case
$\alpha=2$. Example~15 in Schweinsberg \cite{SC01}, or (\ref{eqdepth})
and (\ref{eqdepth2}) in this paper, ensure that
the $\Beta(2-\alpha,\alpha$)-coalescent
comes down from infinity [see around (\ref{CDI})], if and only if
$\alpha>1$.

\begin{lem}
\label{lemfactorize}
When $\Lambda(dx)$ is given by \emph{(\ref{betafamily})} for some
$\alpha\in(0,2)$,
the jump rates $\tilde{\Gamma}_{i,i+j}$ of the fixation line $(L(t), t
\geq0)$ factorize as follows:
%
\begin{equation}
\label{factorize}
\tilde{\Gamma}_{i,i+j} = \frac{1}{\alpha  \Gamma(\alpha)}
\frac{\Gamma(i+\alpha)}{\Gamma(i)} \frac{\alpha}{ \Gamma(2-\alpha)} \frac{\Gamma(j-\alpha
+1)}{\Gamma
(j+2)}.
\end{equation}
\end{lem}

Conversely, it is not difficult to show that the $\Beta(2-\alpha,\alpha)$ family contains all the probability
measures $\Lambda(dx)$ for
which $\tilde{\Gamma}_{i,i+j}$ factorizes as a product of a function of
$i$ and a function of $j$.
We stress that the transition rates $\Lambda_{j,j-i}$ of the block
counting process of the
$\Beta(2-\alpha,\alpha)$-coalescent
do not enjoy such a factorization property.

To sum up, adopting a backward viewpoint results in a seemingly
anecdotic change of the exponent of $(1-x)$ in the rate (\ref{defGammaij}) with respect to the rate (\ref{deflambdaij}),
which in turn yields a factorization for the $\Beta(2-\alpha,\alpha)$-coalescent.
This factorization will be the key to exact computations.

\begin{pf*}{Proof of Lemma~\protect\ref{lemfactorize}}
The claim follows from the following elementary calculation
\begin{eqnarray*}
\tilde{\Gamma}_{i,i+j} & =& \frac{1}{\Gamma(\alpha)\Gamma(2-\alpha
)} \pmatrix{i+j \cr j+1}
\int_{(0,1)} dx x^{-\alpha-1} (1-x)^{\alpha-1}
x^{j+1} (1-x)^{i} \,dx
\\
& =&  \frac{1}{\Gamma(\alpha)\Gamma(2-\alpha)} \frac
{(i+j)!}{(j+1)!(i-1)!} \Beta (j-\alpha+1,i+\alpha)
\\
& =& \frac{1}{\Gamma(\alpha)\Gamma(2-\alpha)} \frac{\Gamma
(i+\alpha
)}{\Gamma(i)} \frac{\Gamma(j-\alpha+1)}{\Gamma(j+2)}.
\end{eqnarray*}
\upqed\end{pf*}

Let $\mathcal{S}_j= \{ L_j(t),   t \geq0\}$ be the range of the
fixation line
started at $j$.
Lemma~\ref{lemfactorize} entails that the law of the translated range
$\mathcal{S}_j-j=\{ L_j(t)-j,   t \geq0\}$ does not depend on $j$ in the
Beta($2-\alpha,\alpha$) case.
We shall simply use $\mathcal{S}$
to denote this random set. The set $\mathcal{S}$ is the range of a renewal
process, and we compute its renewal measure.
We set
%
\begin{equation}
\label{eqpngenerayting}
\varphi_{\eta^{\star}}(s)= \cases{
\ds - s/\bigl((1-s) \log(1-s)\bigr),  & \quad $\mbox{if } \alpha= 1$,
\vspace*{3pt}\cr
\ds - (\alpha-1) s/ \bigl[ (1-s)^{\alpha} -(1-s) \bigr], & \quad$\mbox{if } \alpha
\in(0,2) \setminus\{1\}$.}
\end{equation}
%

\begin{prop}
When $\Lambda(dx)$ belongs to the $\Beta(2-\alpha,\alpha)$ family
given by (\ref{betafamily})
for some $\alpha\in(0,2)$,
the generating function of the renewal measure is
%
\begin{equation}
\label{eqpngenerayting0}
\sum_{i \geq0} \mathbb{P}(i \in
\mathcal{S}) s^{i}= \varphi _{\eta^{\star}}(s).
\end{equation}
\end{prop}

\begin{pf}
The random set $\mathcal{S}$ is a renewal point process on $\mathbb{Z}^+$ based on the
interarrival measure
%
\begin{equation}
\label{eqetabeta}
\eta \{j\} =\frac{\alpha}{ \Gamma(2-\alpha)}\frac{\Gamma
(j-\alpha+1)}{\Gamma(j+2)}, \qquad j \geq1.
\end{equation}
The measure $\eta$ is a probability measure, as confirmed by setting $s=1$
in the following computation of the generating function $\varphi_{\eta}(s)$ of $\eta$.
We first do the computation for $\alpha\in(0,2) \setminus\{1\}$:
\[
\varphi_{\eta}(s) = \sum_{j \geq1} \eta\{j\}
s^j = \sum_{j \geq1} \frac{-1}{1-\alpha} \pmatrix{j-\alpha \cr j+1} s^j = \frac{-1}{(1-\alpha) s} \sum
_{j \geq2} \pmatrix{\alpha\cr j} (-s)^{j}.
\]
Using the binomial theorem, we deduce that
\[
\varphi_{\eta}(s) = \frac{-1}{(1-\alpha)s } \bigl[ (1-s)^{\alpha
} - 1
+ \alpha s \bigr] = 1 + \frac{1}{(\alpha-1)s } \bigl[ (1-s)^{\alpha} - (1 - s)
\bigr].
\]
We now consider the case $\alpha=1$:
\[
\varphi_{\eta}(s)= \sum_{j \geq1} \eta\{j\}
s^j = \frac{1}{s} \sum_{j \geq1}
\frac{1}{j(j+1)} s^{j+1} = 1+ \frac{(1-s)\log(1-s)}{s},
\]
using for the last equality that the primitive of $s \mapsto-\log
(1-s)$ null at $0$ is
\[
s \mapsto(1-s)\log(1-s)+s.
\]
We deduce the
generating function $\varphi_{\eta^{\star}}(s)$
of the renewal measure using the renewal property. Let $\mathcal{S}=\{
0=L^0<L^1<L^2< \cdots\}$
be the enumeration of the elements of $\mathcal{S}$ in increasing
order. We have
\[
\varphi_{\eta^{\star}}(s) =\sum_{i \in\mathbb Z^+}
s^i \mathbb {P}(i \in\mathcal{S}) = \mathbb{E} \biggl(\sum
_{i \in\mathbb Z^+} s^{L^i} \biggr) = 1+ \mathbb{E}
\bigl(s^{L^1} \bigr) \mathbb{E} \biggl( \sum
_{i \in\mathbb{Z}^+} s^{L^i} \biggr),
\]
that is,
\[
\varphi_{\eta^{\star}}(s) = 1+ \varphi_{\eta}(s) \varphi_{\eta
^{\star}}(s),
\]
and the claim follows.
\end{pf}

\begin{rem}
The distribution $\eta$ in (\ref{eqetabeta}) is, up to a shift by $1$,
the offspring distribution in the reduced tree associated with an
$\alpha$-stable branching process; see Theorem~3.3.3(i) in \cite{DLG02}.
This points to the connection with $\alpha$-stable trees
mentioned at the end of the \hyperref[secfixation]{Introduction}.
\end{rem}

In two particular cases, the renewal measure $\mathbb{P}(i \in
\mathcal{S})$ is
explicit: in the case $\alpha=1/2$, we have
%
\begin{eqnarray}
\sum_{j \geq0} \mathbb{P}(j \in
\mathcal{S}) s^j &=& \frac{1}{2} \biggl(\frac{1}{\sqrt
{1-s}} + 1
\biggr)
\nonumber
\\[-8pt]
\label{eqalphaonehalf}\\[-8pt]
\nonumber
&=& \frac{1}{2} \sum_{j \geq0} \biggl(
\frac{\Gamma(j+1/2)}{\Gamma(1/2)   \Gamma(j+1)}+\mathbf{1}_{\{j=0\}
} \biggr) s^j
\end{eqnarray}
and in the case $\alpha=3/2$, we have
%
\begin{eqnarray}
\sum_{j \geq0} \mathbb{P}(j \in
\mathcal{S}) s^j &=&  \frac{1}{2} \biggl(\frac{1}{\sqrt{1-s}} +
\frac{1}{1-s} \biggr)
\nonumber
\\[-8pt]
\label{eqalphathreehalf}
\\[-8pt]
\nonumber
&=&  \frac{1}{2} \sum_{j \geq0}
\biggl(\frac{\Gamma(j+1/2)}{\Gamma(1/2)   \Gamma(j+1)}+1 \biggr) s^j.
\end{eqnarray}

The measure $\eta$ given by (\ref{eqetabeta}) is a probability measure,
therefore, we have,
from the definition of $\Lambda_{i+1}$
[short after (\ref{hypergeom3})] and (\ref{factorize}), that
%
\begin{equation}
\label{eqsim-nalpha}
\Lambda_{i+1}= \tilde{\Gamma}_{i,\geq i+1}=
\frac{1}{\alpha\Gamma
(\alpha)} \frac{\Gamma(i+\alpha)}{\Gamma(i)} \sim\frac{1}{\alpha\Gamma(\alpha)} i^{\alpha} \qquad
\mbox{as } i \rightarrow\infty,
\end{equation}
where $a_n \sim b_n$ means that $\lim_{n \rightarrow\infty} a_n/b_n =1$.
We also notice, for future use, that the transition rate from $i$
blocks to $1$ block satisfies
%
\begin{equation}
\label{eqlambdaj1}
\Lambda_{i,1}= \frac{1}{\Gamma(2-\alpha)} \frac{\Gamma
(i-\alpha
)}{\Gamma(i)}
\sim\frac{1}{\Gamma(2-\alpha)} i^{-\alpha} \qquad \mbox{as } i \rightarrow\infty.
\end{equation}

\section{Applications}
\label{sec2}

\subsection{The fixation line and the set of records}
\label{sec21}

The first application of the fixation line consists in the computation
of the probability for an\vspace*{1.5pt} integer $i$ to be a record.
Recall that the set $\mathcal{T}$
of records is the set
$\{i \geq2, \tau_{1}^{i} >\tau_{1}^{i-1}\}$ where the sequence $\tau_1^i$
is defined in the natural coupling of the $n$-coalescents; see the
\hyperref[secfixation]{Introduction} for the definition of this coupling.
Recall also that $\mathcal{S}_1= \{L_1(t), t \geq0\}$ stands for the
range of
the fixation line started at $1$.
We stress the proposition is valid for a general probability measure
$\Lambda(dx)$.

\begin{prop}
\label{lemrecord}
The marginal distribution of the set of records $\mathcal{T}$ satisfies
\[
\mathbb{P}(i \in\mathcal{T})= \frac{\mathbb{P}(i-1 \in\mathcal{S}_1)}{\Lambda_{i}},\qquad i \geq2.
\]
\end{prop}

\begin{pf}
If $\mathbf{e}$ denotes an exponential random variable with parameter
$1$ that
is independent of $\tau^{i-1}_1$ and $\{i \in\mathcal{T}\}$, it holds
%
\begin{equation}
\label{eqrecord} \tau^{i}_1=\tau^{i-1}_1
+ \mathbf{1}_{i \in\mathcal{T}} \mathbf{e},\qquad i \geq2,
\end{equation}
%
and we deduce
%
\begin{equation}
\label{eqrecord2}
\hspace*{12pt}\mathbb{P}(i \in\mathcal{T}) = \mathbb{E}\bigl(
\tau^{i}_1\bigr)-\mathbb {E}\bigl(\tau^{i-1}_1
\bigr) =\mathbb{E}\bigl(\alpha^{i}_1\bigr)-\mathbb{E}
\bigl(\alpha^{i-1}_1\bigr)= \mathbb{P}(i-1 \in
\mathcal{S}_1)/\Lambda_{i}
\end{equation}
using Lemma~\ref{tau-alpha} for the second equality, and relation
(\ref{hypergeom3}) for the third equality.
\end{pf}

Let $(\mathbf{e}_i)_{2 \leq i \leq n}$ be a collection of independent
exponential random variables with parameter $1$, also independent of
$\mathcal{T}$. Iterating (\ref{eqrecord}) yields
\[
\tau_1^n= \sum_{2 \leq i \leq n}
\mathbf{1}_{\{i \in\mathcal{T}\}} \mathbf{e}_i.
\]
%
Combining with
the discussion on coalescents which come down from infinity that
follows Lemma~\ref{eqdeftaualpha},
we deduce that the cardinality of the set $\mathcal{T}$ is a.s.
infinite or a.s.
finite. It is infinite when the coalescent stays infinite, and finite
when the coalescent comes down from infinity.

Recall that $\mathcal S \stackrel{(d)}{=} \mathcal S_1-1$ stands for the
shifted range of the fixation line. Using Proposition~\ref{lemrecord}
and formulas
(\ref{eqalphaonehalf}) and (\ref{eqsim-nalpha}), we obtain the
following expression for the record probabilities in the case $\alpha=1/2$:
\[
\mathbb{P}(i \in\mathcal{T}) = \frac{1}{2} \biggl( \frac{1}{2i-3} +
\mathbf{1}_{\{i=2\}} \biggr),\qquad  i \geq2.
\]
This result gains a clear interpretation in the representation of the
$\Beta(3/2,\break 1/2)$-coalescent found by
Abraham and Delmas \cite{AD13},
which uses the pruning at nodes of a labelled binary tree with $n$ leaves.
In case $\alpha=3/2$, we use (\ref{eqalphathreehalf}) instead of~(\ref{eqalphaonehalf}) to obtain
\[
\mathbb{P}(i \in\mathcal{T}) = \frac{3}{2} \frac{1}{(2i-1)(2i-3)}+
\frac{3}{4} \frac{ \Gamma(3/2) \Gamma(i-1)}{\Gamma(i+1/2)},\qquad  i \geq2.
\]
For general $\alpha\in(0,2)$, we compute the generating function of
the record probabilities.

\begin{prop}
\label{lemrecord2}
The marginal distribution of the set of records in the
$\Beta(2-\alpha,\alpha)$-coalescent has the following generating function:
\[
\sum_{i \geq2} \mathbb{P}(i \in\mathcal{T})
s^i = s^3 \int_{(0,1)} dx
\frac{- x}{(1-sx) \log(1-sx)}
\]
in the Bolthausen--Sznitman case $\alpha=1$, and
\[
\sum_{i \geq2} \mathbb{P}(i \in\mathcal{T})
s^i= \alpha (1-\alpha) s^3 \int_{(0,1)}
dx \frac{x}{
(1-x)^{1-\alpha}  [ (1-sx)^{\alpha} -(1-sx) ]}
\]
in case $\alpha\in(1,2) \setminus\{1\}$.
\end{prop}

\begin{pf}
We do the following computation:
\begin{eqnarray*}
\sum_{i \geq2} \mathbb{P}(i \in\mathcal{T})
s^i 
&=& \sum_{i \geq2} \alpha
\frac{\Gamma(\alpha)   \Gamma
(i-1)}{\Gamma
(i-1+\alpha)} \mathbb{P}(i-2 \in\mathcal{S}) s^i
\\
&=&\int_{(0,1)} dx \alpha (1-x)^{\alpha-1} s^2
\sum_{i
\geq
2} \mathbb{P}(i-2 \in\mathcal{S})
(sx)^{i-2}
\\
&=& \int_{(0,1)} dx \alpha (1-x)^{\alpha-1} s^2
\varphi _{\eta
^\star}(sx)
\end{eqnarray*}
using Proposition~\ref{lemrecord}, the definition of
$\mathcal{S}=\mathcal{S}_1-1$ and
formula (\ref{eqsim-nalpha}) at the first equality,
the link between Gamma and Beta functions at the second equality as
well as the Fubini--Tonelli theorem.
The claim now follows substituting $\varphi_{\eta^\star}$ by its
value given
in (\ref{eqpngenerayting}), distinguishing whether $\alpha\in(0,2)
\setminus\{1\}$ or $\alpha=1$.
\end{pf}

\begin{cor}
\label{cordepth}
The depth $\tau_{1}^n$ of the $n$-$\Beta(2-\alpha,\alpha)$-coalescent almost surely converges as $n \rightarrow\infty$
in the natural coupling
to a random variable $\tau_1$ with expectation:
%
\begin{equation}
\label{eqdepth}
\mathbb{E}(\tau_1)= \alpha(\alpha-1) \int
_{(0,1)} dx \frac{x}{(1-x)^{2-\alpha}
[1-(1-x)^{\alpha-1}]}
\end{equation}
in case $\alpha\in(1,2)$.
\end{cor}

\begin{pf}
The sequence $\tau_{1}^n$ is increasing in the natural coupling of the
$n$-coalescents
whence the a.s. convergence. For the expectation: set $s=1$ in
Proposition~\ref{lemrecord2}, and use the first equality in
(\ref{eqrecord2}): this gives a telescopic sum
with sum $\mathbb{E}(\tau_1)$.
\end{pf}

Since $\mathbb{E}(\tau_1)$ is finite according to (\ref{eqdepth}), the
$\Beta(2-\alpha,\alpha)$-coalescent with $\alpha\in(1,2)$ comes down
from infinity.
On the other hand, when $\alpha\in(0,1]$, we have, using again
Proposition~\ref{lemrecord2} with $s=1$ that
%
\begin{equation}
\label{eqdepth2}
\lim_{n \rightarrow\infty} \mathbb{E}\bigl(
\tau_1^n\bigr) = \infty,
\end{equation}
and the $\Beta(2-\alpha,\alpha)$-coalescent with $\alpha\in(0,1]$
therefore stays infinite.
In this case, the suitably rescaled random variables $\tau_1^n$, as $n
\rightarrow\infty$,
have been proved to converge in law. We refer to \cite{GN14} and the
references therein
for a refined study of these random variables.

\subsection{The fixation line and the last coalescence}
\label{sec22}

We consider the block counting process
$(X^n(t), t \geq0)$ of the $n$-coalescent.
Its embedded Markov chain starts at $n$, and has transitions probabilities:
%
\begin{equation}
\label{eqPji}
P_{ji}=\frac{\Lambda_{j,i}}{\Lambda_j},\qquad  j > i \geq1,
\end{equation}
with $\Lambda_{j,i}$ the transition rate from $j$ to $i$ blocks defined
in (\ref{deflambdaij}) and $\Lambda_j= \sum_{i<j} \Lambda_{ji}$.
In this subsection, we consider the problem of the convergence of the
time-reversal of this Markov chain as
the initial number of blocks $n$ goes to $\infty$. Unlike the
transition probabilities of the original chain,
the transition probabilities of the chain reversed in time
depend on the starting point $n$, and we shall\vspace*{1pt} use $\tilde{P}_{ij}^n$
to denote the
transition probability of the reversed chain from $i$ to $j$, $1 \leq i
< j$,
when the original chain is starting at $n$.
For each integer $i$, we have a collection, indexed by the integer $n$,
of probability measures
\[
\bigl(\tilde{P}_{ij}^n, i \leq j \leq n\bigr),
\]
and we propose to study the weak convergence of this family of
probability measures as $n \rightarrow\infty$.
We first observe that the question for an arbitrary $i\geq1$ may be
reduced to the
case $i=1$:
if $\mathcal{R}^n= \{X^n(t), t \geq0 \}$ stands for the range of the block
counting process of the $n$-coalescent,
we have the equality
%
\begin{equation}
\label{eqpij-n}
\mathbb{P}\bigl(i \in\mathcal{R}^n\bigr)
\tilde{P}^n_{ij} = \mathbb{P}\bigl(j \in
\mathcal{R}^n\bigr) P_{ji},
\end{equation}
that is a particular instance of Nagasawa's formula (cf., e.g., Chapter
III.42, III.46 in \cite{RW00}).
We form two further equations that we call the second and the third
equations, by specializing the first equation (\ref{eqpij-n}) to the
couple $(1,j)$ and $(1,i)$, respectively. We then divide the first
equation by the second, and then multiply by the third equation, the
operations being term by term. This gives
%
\begin{equation}
\label{eqPijn}
\tilde{P}^n_{ij} = 
\frac{ P_{ji}   P_{i1}}{P_{j1}} \frac{\tilde{P}^n_{1j}}{\tilde
{P}^n_{1i}}.
\end{equation}
The following proposition gives an expression of the distribution
$(\tilde{P}^n_{1j}, 1<j \leq n)$ in term of quantities related to the
fixation line.
This is the essential conceptual step in the study of the last
coalescence since the next steps, carried out in the case
of the Beta($2-\alpha,\alpha$)-coalescent in the next subsection,
reduce to the computation of $\mathbb{E}(\alpha_{j}^n)$.

\begin{prop}
\label{proplastco}
The distribution $(\tilde{P}^n_{1j}, 2 \leq j \leq n)$ of the number of
blocks involved in the last coalescence of a $n$-coalescent
satisfies
%
\begin{equation}
\label{eqP1jn} \tilde{P}^n_{1j} = \Lambda_{j1}
\bigl[\mathbb{E}\bigl(\alpha _{j-1}^n\bigr)-\mathbb{E}\bigl(
\alpha _{j}^n\bigr) \bigr],\qquad  2 \leq j \leq n.
\end{equation}
\end{prop}

\begin{pf}
We compute
\[
\tilde{P}^n_{1j}=P_{j1} \mathbb{P}\bigl(j \in
\mathcal{R}^n\bigr) =\Lambda_{j1} \mathbb{E} \biggl( \int
_{0}^{\infty} ds \mathbf {1}_{\{X^{n}(s)=j\}
} \biggr)
=\Lambda_{j1} \bigl[\mathbb{E}\bigl( \tau^{n}_{j-1}
\bigr)-\mathbb{E}\bigl( \tau ^{n}_{j} \bigr) \bigr],
\]
setting $i=1$ in (\ref{eqpij-n}) for the first equality,
using the definition (\ref{eqPji}) of $P_{j1}$ and the fact that the
block counting process spends an exponential time
at $j$ with parameter $\Lambda_j$ when $j \in\mathcal{R}^n$ for the second
equality, and the pathwise relation
$\tau^{n}_{j-1}=\tau^{n}_{j} + \int_{(0,\infty)} ds   \mathbf
{1}_{\{
X^{n}(s)=j\}}$
for the last equality. We conclude using Lemma \ref{tau-alpha}.
\end{pf}

For $j \geq1$, and with $\alpha_j$ the increasing limit of
$(\alpha_j^n, n \geq j)$, we have\break $\lim_{n \rightarrow\infty}
\mathbb{E}(\alpha_j^n) =
\mathbb{E}(\alpha_j)$.
In case the coalescent comes down from infinity,
$\mathbb{E}(\alpha_j)<\infty$, and
\[
\lim_{n \rightarrow\infty} \mathbb{E}\bigl(\alpha_{j-1}^n
\bigr)-\mathbb {E}\bigl(\alpha_{j}^n\bigr) = \mathbb{E}(
\alpha _{j-1})-\mathbb{E}(\alpha_{j})
\]
for each $j \geq1$. The convergence of
$(\tilde{P}^n_{ij}, n \geq2)$ for arbitrary $i<j$
follows from
(\ref{eqPijn}) and (\ref{eqP1jn}).
A more interesting case is when
the coalescent stays infinite. Then we cannot directly conclude to the
convergence of
the difference of the expectations $\mathbb{E}(\alpha
_{j-1}^n)-\mathbb{E}(\alpha_{j}^n)$.
Setting $\mathcal{S}_j=\{L_j(t), t \geq0\}$ for the range of the
fixation line
$L_j$ started at $j$,
we have
%
\begin{equation}
\label{eqproblem}
\mathbb{E}\bigl(\alpha_{j-1}^n\bigr)-
\mathbb{E}\bigl(\alpha_{j}^n\bigr) =\sum
_{j-1 \leq i \leq n-1} \bigl[ \mathbb{P}(i \in\mathcal {S}_{j-1})-
\mathbb{P}(i \in\mathcal{S}_{j}) \bigr] \frac{1}{\Lambda_{i+1}}
\end{equation}
using that the rate at which the fixation line leaves $i$ is $\Lambda_{i+1}$.
If the coalescent stays infinite, the series with general term
$1/\Lambda_{i}$ is easily
seen to diverge, since
\[
\mathbb{E}\bigl(\tau_{1}^n\bigr) = \sum
_{2 \leq i \leq n} \frac{1}{\Lambda
_i} \mathbb{P}\bigl(i\in
\mathcal{R}^n\bigr) \leq\sum_{2 \leq i \leq n}
\frac{1}{\Lambda_{i}}.
\]
Proving convergence in (\ref{eqproblem}) as $n \rightarrow\infty$ therefore
requires a further study of
$ [ \mathbb{P}(i \in\mathcal{S}_{j-1})-\mathbb{P}(i \in
\mathcal{S}_j)]$, which is a
difficult issue in general.
The factorization property satisfied by the $\Beta(2-\alpha,\alpha)$-coalescent; see Lemma~\ref{lemfactorize}, allows to
circumvent the difficulty.

\subsubsection{The \texorpdfstring{$\Beta(2-\alpha,\alpha)$}{Beta(2-alpha,alpha)}-coalescent}
\label{secbetaalpha}

Before stating our main theorem, it is perhaps opportune to recall the
statement of the problem of the last coalescence in a self contained manner:
The block counting process $(X^{n}(t), t \geq0)$ is a Markov chain
started at $n$ and a.s. absorbed at $1$ in finite time.
In the case of the $\Beta(2-\alpha,\alpha)$-coalescent, the transitions
rates of the block counting process from $j$ to $i$ are given by
\[
\Lambda_{j,i} = \frac{\Gamma(j+1)}{\Gamma(j)} \frac{\Gamma(j-i+1-\alpha
)}{\Gamma
(j-i+2)}
\frac{\Gamma(i+ \alpha-1)}{\Gamma(i)},\qquad 1 \leq i < j < \infty.
\]
What is the law of the last jump of $(X^{n}(t), t \geq0)$ as $n
\rightarrow
\infty$? The following theorem answers the question.

\begin{theo}
\label{propconv-Delta}
The\vspace*{1pt} distribution $(\tilde{P}_{1j}^n, j\geq2)$ of the number of blocks
implied in the last coalescence of the $n$-$\Beta(2-\alpha,\alpha)$-coalescent
weakly converges as $n \rightarrow\infty$ toward a distribution
$(\tilde{P}_{1j}, j \geq2)$ with generating function
%
\begin{equation}
\label{eqgeneratingalphaeq1}
\sum_{j \geq2} \tilde{P}_{1j}
s^j = s \int_{(0,1)} dx \frac{\log(1-sx)}{\log(1-x)}
\end{equation}
in the Bolthausen--Sznitman case $\alpha=1$, and
%
\begin{equation}
\label{eqgeneratingalphaneq1}
\sum_{j \geq2} \tilde{P}_{1j}
s^j = \alpha s \int_{(0,1)} dx \frac{1}{1-(1-x)^{1-\alpha}}
\biggl[\frac{1}{(1-sx)^{1-\alpha}}-1 \biggr]
\end{equation}
in case $\alpha\in(0,2) \setminus\{1\}$.
\end{theo}

Setting $s=1$ in formulas (\ref{eqgeneratingalphaeq1}) and
(\ref{eqgeneratingalphaneq1})
allows to see that the
collection $(\tilde{P}_{1j}, j \geq2)$ is a probability measure.
Proposition~1.5 of Abraham and Delmas \cite{AD13+} contains the result
for $\alpha\in(0,1/2]$.
The proof given there relies on a connection with the pruning of L\'evy trees.
In the case $\alpha=1$, we deduce from (\ref{eqgeneratingalphaeq1}) that
%
\begin{eqnarray}
\tilde{P}_{1j} &=&  \frac{1}{j-1} \int_{(0,1)} dx
\frac
{x^{j-1}}{-\log
(1-x)}
\nonumber
\\
&= &\frac{1}{j-1} \int_{(0,\infty)} \frac{du}{u}
\bigl(1-\mathrm{e}^{-u}\bigr)^{j-1} \mathrm{e}^{-u}
\nonumber
\\
\label{eqGM}
&=& \frac{1}{j-1} \int_{(0,\infty)} \frac{du}{u} \sum
_{0 \leq k
\leq
j-1}\pmatrix{j-1 \cr k} (-1)^k
\bigl(\mathrm{e}^{-(k+1)u}-\mathrm{e}^{-u}\bigr)
\\
&=& \frac{1}{j-1} \sum_{1 \leq k \leq j-1} \pmatrix{j-1 \cr k}
(-1)^k \int_{(0,\infty)} \frac{du}{u}
\bigl(\mathrm{e}^{-(k+1)u}-\mathrm{e}^{-u}\bigr)
\nonumber
\\
&=&  \frac{1}{j-1} \sum_{1 \leq k \leq j-1} \pmatrix{j-1 \cr k}
(-1)^{k+1} \log(k+1),\nonumber
\end{eqnarray}
using\vspace*{1pt} the change of variable $x = 1-\mathrm{e}^{-u}$ at the second equality,
expanding $(1-\mathrm{e}^{-u})^{j-1}$ with the binomial theorem, and
compensating the resulting $\mathrm{e}^{-(k+1)u}$ by $\mathrm{e}^{-u}$ at the third
equality (this has no effect), so that each of the $j$ terms in the sum
are integrable functions of the variable $u$. (This cancels the term
$k=0$.) Then we consider each of the $j-1$ integrals separately at the
fourth equality, and each integral assumes the form of a Frullani
integral (cf.  \cite{O49},  e.g.):
\[
\int_{(0,\infty)} \frac{du}{u} \bigl(f(au)-f(bu)\bigr)\qquad
\mbox{with } f(u)= \mathrm{e}^{-u}, a=(k+1) \mbox{ and } b=1.
\]
A direct calculation ensures this integral is equal to $(f(0)-f(+\infty
)) \log(b/a)$, where $f(0)$ and $f(+\infty)$ are the limits of $f$ at
$0$ and $+\infty$, respectively, and
this gives the expression (\ref{eqGM}). This expression of $\tilde
P_{1j}$ is due to Goldschmidt and Martin \cite{GM05}, who obtained it
using a connection with the pruning of recursive trees.
It is interesting to observe the diversity of the methods at work in
\cite{AD13+,GM05} and the present paper.

\begin{pf*}{Proof of Theorem~\protect\ref{propconv-Delta}}
Recall $\mathcal{S}$ denotes the range of the renewal point process on
$\mathbb{Z}^+$
containing $0$ and with interarrival times with law
$\eta$ given by (\ref{eqetabeta}). We compute
%
\begin{eqnarray}
\mathbb{E}\bigl(\alpha_{j-1}^n\bigr)-\mathbb{E}\bigl(
\alpha_{j}^n\bigr) &= & \sum_{j-1 \leq i \leq n-1}
\mathbb{P}(i \in\mathcal{S}_{j-1}) \frac{1}{\Lambda
_{i+1}} - \sum
_{j \leq i \leq n-1} \mathbb{P}(i \in\mathcal {S}_{j})
\frac
{1}{\Lambda_{i+1}}
\nonumber
\\
\label{eqsum}&= & \sum_{j \leq i \leq n} \mathbb{P}(i-j \in\mathcal{S})
\frac{1}{\Lambda
_{i}} - \sum_{j \leq i \leq n-1} \mathbb{P}(i-j \in
\mathcal{S}) \frac{1}{\Lambda
_{i+1}}
\\
\nonumber
&= & \sum_{j \leq i \leq n-1} \mathbb{P}(i-j \in
\mathcal{S}) \biggl(\frac{1}{\Lambda
_{i}}-\frac{1}{\Lambda_{i+1}} \biggr) + \mathbb{P}(n-j
\in\mathcal{S}) \frac{1}{\Lambda_{n}},
\end{eqnarray}
beginning as in (\ref{eqproblem})
for the first equality, using the definition of the translated range
$\mathcal{S}=\mathcal{S}_j-j$,
that is independent of $j$ in the $\Beta(2-\alpha,\alpha)$ setting, and
then changing the index in the first sum
at the second equality.
Bounding $\mathbb{P}(i-j \in\mathcal{S})$ from above by $1$, and
using the positivity
of $\Lambda_n$, we obtain the following upper bound:
%
\begin{eqnarray}
\sum_{j \leq i \leq n-1} \mathbb{P}(i-j \in
\mathcal{S}) \biggl(\frac{1}{\Lambda
_{i}}-\frac{1}{\Lambda_{i+1}} \biggr) &\leq & \sum
_{ j \leq i \leq n-1} \frac{1}{\Lambda_{i}}-\frac
{1}{\Lambda_{i+1}}
\nonumber
\\[-8pt]
\label{eqbounded}
\\[-8pt]
\nonumber
& =&
\frac{1}{\Lambda_{j}} - \frac{1}{\Lambda_{n}} \leq\frac
{1}{\Lambda
_{j}}.
\end{eqnarray}
The series on the left-hand side of (\ref{eqbounded}) has nonnegative
terms [the sequence $(\Lambda_{i}, i \geq2)$ is nondecreasing] and is
bounded, therefore, it converges.
Also, the sequence $(\Lambda_{j}, j \geq2)$ goes to $\infty$ by
(\ref{eqsim-nalpha}).
The second term in (\ref{eqsum}) then goes to~$0$.
Using (\ref{eqP1jn}), we conclude that the limit as $n \rightarrow
\infty$ of
the quantities
$\tilde{P}_{1j}^n$ exists, we denote it by $\tilde{P}_{1j}$.
Setting $k=i-j$, we have
\[
\tilde{P}_{1j} = \Lambda_{j1} \sum
_{k \geq0} \mathbb{P}(k \in\mathcal{S}) \biggl(\frac
{1}{\Lambda_{k+j}}-
\frac
{1}{\Lambda_{k+j+1}} \biggr) < \infty. 
\]
Setting the explicit values (\ref{eqsim-nalpha})
and (\ref{eqlambdaj1}) of $\Lambda_{j}$ and $\Lambda_{j1}$
gives
\[
\tilde{P}_{1j} = \frac{1}{j-1} \sum_{k \geq0}
\mathbb{P}(k \in\mathcal{S}) \biggl[\frac{1}{k+j-1}- \frac{1}{k+j}
\biggr]
\]
in case $\alpha=1$, and
\[
\tilde{P}_{1j} =\frac{\alpha^2  \Gamma(\alpha)}{\Gamma(2-\alpha)} \frac{\Gamma(j-\alpha)}{\Gamma(j)} \sum
_{k \geq0} \mathbb{P}(k \in\mathcal{S}) \frac{\Gamma(k+j-1)}{\Gamma
(k+j+\alpha)}
\label{eqgeneratingalphaneq1-check2}
\]
in case $\alpha\in(0,2) \setminus\{1\}$.
Recall the expression (\ref{eqpngenerayting0}) for the generating\vspace*{0.5pt}
function of the numbers
$\mathbb{P}(k \in\mathcal{S})$.
Multiplying both sides of (\ref{eqpngenerayting0}) by
$s^{j-2}(1-s)^\alpha$, integrating with respect to $s \in(0,1)$ and
using Fubini--Tonelli theorem, we deduce
\[
\sum_{k \geq0} \mathbb{P}(k \in\mathcal{S}) \biggl[
\frac
{1}{k+j-1}- \frac{1}{k+j} \biggr] = - \int_{(0,1)}
ds \frac{ s^{j-1}}{ \log(1-s)}
\]
in case $\alpha=1$,
and
\[
\sum_{k \geq0} \mathbb{P}(k \in\mathcal{S})
\frac{\Gamma
(k+j-1)\Gamma(\alpha
+1)}{\Gamma(k+j+\alpha)} = -(\alpha-1) \int_{(0,1)} ds
\frac{s^{j-1}}{1-(1-s)^{1-\alpha}}
\]
in case $\alpha\in(0,2) \setminus\{1\}$, using also the expression
of the Beta function in term of the Gamma function.
From the last four equations displayed, we obtain
%
\begin{equation}
\label{eqalpha1}
\tilde{P}_{1j}= \frac{-1}{j-1}\int
_{(0,1)} dx \frac
{x^{j-1}}{\log(1-x)}
\end{equation}
in case $\alpha=1$, and
%
\begin{equation}
\label{eqgeneratingalphaneq1-check}
\tilde{P}_{1j} = (-1)^{j-1} \alpha \pmatrix{\alpha-1
\cr j-1} \int_{(0,1)} dx \frac
{x^{j-1}}{1-(1-x)^{1-\alpha}}
\end{equation}
in case $\alpha\in(0,2) \setminus\{1\}$,
which imply, respectively, (\ref{eqgeneratingalphaeq1}) and (\ref
{eqgeneratingalphaneq1}), multiplying by $s^j$
and summing over $j \geq2$.
\end{pf*}

\begin{cor}
\label{corhitting}
The probability for an integer $j \geq2$ to be in the range $\mathcal
{R}^n$ of the
block counting process of the $n$-$\Beta(2-\alpha,\alpha)$-coalescent converges as $n \rightarrow\infty$ and
\[
\lim_{n \rightarrow\infty} \mathbb{P}\bigl(j \in\mathcal{R}^n
\bigr) = (j-1) \int_{(0,1)} dx \frac
{x^{j-1}}{-\log(1-x)}
\]
in the Bolthausen--Sznitman case $\alpha=1$, and
%
\begin{equation}
\label{eqhitting}\hspace*{15pt} \lim_{n \rightarrow\infty} \mathbb{P}\bigl(j \in
\mathcal{R}^n\bigr) = \frac{1}{\Gamma(\alpha)} \frac{\Gamma(j-1+\alpha)}{\Gamma
(j-1)} \int
_{(0,1)} dx x^{j-1} \frac{1-\alpha}{1-(1-x)^{1-\alpha}}
\end{equation}
in case $\alpha\in(0,2) \setminus\{ 1 \}$.
\end{cor}

Notice the integrands in both integral representations are nonnegative
whatever the value of $\alpha\in(0,2)$.
Also, the Bolthausen--Sznitman case in Corollary~\ref{corhitting}
corresponds to the statement of Theorem~1.1 in M\"ohle
\cite{MO13}, and the case
$\alpha\in(0,2) \setminus\{1\}$
answers a question posted in the same paper; see Remark~3. The question has
also been answered independently by M\"ohle in \cite{MO13+} in a
subsequent paper.

\begin{pf*}{Proof of Corollary~\protect\ref{corhitting}}
This is a consequence of equation (\ref{eqpij-n}) together
with formula (\ref{eqalpha1}) in the Bolthausen--Sznitman case
$\alpha=1$,
and together with formula (\ref{eqgeneratingalphaneq1-check}) in the
case $\alpha\in(0,2) \setminus\{1\}$.
\end{pf*}

In case $\alpha\in(1,2)$, the coalescent comes down from infinity and
it is possible to consider directly the range $\mathcal{R}$ of the infinite
coalescent, defined as
the almost sure (local) limit of the
$\mathcal{R}^n$:  $\mathbf{1}_{i \in\mathcal{R}}= \lim_{n \rightarrow\infty}
\mathbf{1}_{i \in\mathcal{R}^n}$.
Dominated convergence theorem then ensures that the right-hand side of
(\ref{eqhitting}) corresponds
to $\mathbb{P}(j \in\mathcal{R})$.
We propose to write
\[
\mathbb{P}(j \in\mathcal{R}):=\lim_{n \rightarrow\infty} \mathbb {P}\bigl(j
\in\mathcal{R}^n\bigr)
\]
whatever the value of $\alpha\in(0,2)$: this is, however, an abuse of
notation since we do not give a meaning
to $\mathcal{R}$
when $\alpha\in(0,1]$.

\begin{cor}
\label{corhitasymptot}
The probability for an integer $j \geq2$ to be in the range of the
block counting process of the $\Beta(2-\alpha,\alpha)$-coalescent
satisfies
\[
\lim_{j \rightarrow\infty} \mathbb{P}(j \in\mathcal{R}) = \alpha-1,
\]
in case $\alpha\in(1,2)$, and
\[
\mathbb{P}(j \in\mathcal{R}) \sim\frac{1-\alpha}{\Gamma(\alpha
)} j^{\alpha-1} \qquad \mbox{as } j \rightarrow\infty,
\]
in case $\alpha\in(0,1)$.
\end{cor}

The asymptotics for $\alpha\in(1,2)$ have been previously derived in
Berestycki et al.
using a connection with $\alpha$-stable continuous tree;
see Theorem~1.8 of \cite{BBS08}. The Bolthausen--Sznitman case $\alpha
=1$ has been covered in M\"ohle \cite{MO13}, whose Corollary~1.2
states that
\[
\mathbb{P}(j \in\mathcal{R}) \sim\frac{1}{\log(j)} \qquad \mbox{as } j \rightarrow
\infty.
\]
\begin{pf*}{Proof of Corollary~\protect\ref{corhitasymptot}}
We first consider the case $\alpha\in(0,1)$. Estimating the left
factor in
(\ref{eqhitting}) is easy:
%
\begin{equation}
\label{eqGammasim}
\Gamma(j-1+\alpha) \sim j^\alpha {\Gamma(j-1)}\qquad \mbox{as } j \rightarrow \infty.
\end{equation}
For the remaining integral factor in (\ref{eqhitting}), we write
%
\begin{equation}
\label{eqsim1}
\int_{(0,1)} dx x^{j-1}
\frac{1}{1-(1-x)^{1-\alpha}}= \int_{(0,1)} dx x^{j-2} h(x)
\end{equation}
for $h(x)= x/  [ 1-(1-x)^{1-\alpha}  ]$.
Then we decompose the integral as follows:
\[
\int_{(0,1)} dx x^{j-2} h(x) = \frac{1}{j-1}
\biggl[ h(1) + \int_{(0,1)} dx (j-1) x^{j-2}
\bigl(h(x)-h(1)\bigr) \biggr].
\]
Fix $\varepsilon>0$. From the continuity of $h$ at $1$, there exists
$\eta
>0$ such that such $|h(x)-h(1)| \leq\varepsilon/2$
for $x \in(1-\eta,1]$, and
\[
\int_{(0,1)} dx (j-1) x^{j-2} \bigl|h(x)-h(1)\bigr| \leq
\frac{\varepsilon}{2} + 2 \|h\|_{\infty} (j-1) (1-\eta)^{j-2} \leq
\varepsilon
\]
for $j$ large enough. Therefore, the left-hand side
of (\ref{eqsim1}) is equivalent to $1/j$, and the claim follows in the
case $\alpha\in(0,1)$.
For the case $\alpha\in(1,2)$, it is more convenient to rewrite the
integral factor as follows:
\[
\int_{(0,1)} dx x^{j-1} \frac{1-\alpha}{1-(1-x)^{1-\alpha}}= (
\alpha-1) \int_{(0,1)} dx \frac{x^{j-1} (1-x)^{\alpha-1}
}{1-(1-x)^{\alpha-1}}.
\]
Then we write
\[
\int_{(0,1)} dx \frac{x^{j-1} (1-x)^{\alpha-1} }{1-(1-x)^{\alpha-1}}= \int_{(0,1)}
dx x^{j-2} (1-x)^{\alpha-1} h(x),
\]
for $h(x)=x/ [1-(1-x)^{\alpha-1}  ]$ this time. The same
reasoning as before allows to conclude that
\begin{eqnarray*}
\int_{(0,1)} dx x^{j-2} (1-x)^{\alpha-1} h(x)
&\sim &  h(1) \int_{(0,1)} dx x^{j-2} (1-x)^{\alpha-1}\\
&=&
\Gamma(\alpha) \frac{\Gamma(j-1)}{\Gamma(j-1+\alpha)},
\end{eqnarray*}
where the equivalent is taken as $j \rightarrow\infty$.
The last constant is the inverse of the first factor in
(\ref{eqhitting}), and the claim follows for $\alpha\in(1,2)$.
\end{pf*}

The definition of the block counting process of the $n$-coalescent entails
\[
\mathbb{E}\bigl(\tau_1^n\bigr) = \sum
_{2 \leq j \leq n} \frac{\mathbb{P}(j
\in\mathcal{R}^n)}{\Lambda_j}.
\]
Taking the $n \rightarrow\infty$ limit in this formula
gives an alternative proof of Corollary~\ref{cordepth} on the expected
depth of the $\Beta(2-\alpha,\alpha)$-coalescent
for $\alpha\in(1,2)$.

\subsection{Depth of the \texorpdfstring{$\Beta(1,1)$}{Beta(1,1)}-coalescent}
We propose to investigate further the Bolthausen--Sznitman coalescent
associated with $\Lambda(dx)= \mathbf{1}_{[0,1]} (x) \, dx$.
This coalescent stays infinite.
In fact, it plays a special r\^ole in the class of the $\Beta(2-\alpha,\alpha)$-coalescents, since
it separates those coalescents which come down from infinity, $\alpha>
1$, from those which stay infinite,
$\alpha\leq1$. Setting $\alpha=1$ in (\ref{factorize}) gives
\[
\tilde{\Gamma}_{i,i+j} = \frac{i}{j(j+1)}.
\]
Therefore, the fixation line $(L(t), t \geq0)$ is a continuous time
discrete state space
branching process (and this is the only coalescent for which this is
the case). We shall call this process the discrete Neveu branching
process, after \cite{N92}.
It has offspring distribution:
%
\begin{equation}
\label{eqmu-BS}
\mu\{j\} = \eta\{j-1\} = \frac{1}{j(j-1)}, \qquad j \geq2,
\end{equation}
since a jump of $j-1$ for the fixation line corresponds to the arrival
of $j$ children together with the death of
the father.
The generating function associated with the offspring distribution $\mu
$ is
\[
\varphi_{\mu}(s) = s \varphi_{\eta}(s) = s + (1-s) \log(1-s),
\qquad 0 \leq s < 1.
\]
The offspring distribution $\mu$ has infinite mean, but the branching
process is conservative, meaning it does not reach $+\infty$
in finite time. This agrees with our observation (after Lemma~\ref
{tau-alpha}) that the fixation line $(L(t), t \geq0)$
remains finite for coalescents which stay infinite, and the
Bolthausen--Sznitman coalescent stays infinite.
The rate of increase of $(L(t), t \geq0)$ is well known (see Grey
\cite{G77}, e.g.) we nevertheless include a proof for
the ease of reference.

\begin{proposition}\label{prop38}
In the Bolthausen--Sznitman case $\Lambda(dx)= \mathbf{1}_{[0,1]} (x)
\,dx$,
we have
%
\begin{equation}
\label{asymptoticBS}
\mathrm{e}^{-t} \log L_1(t) \rightarrow\mathbf{e}
\qquad \mbox{a.s. as } t \rightarrow\infty,
\end{equation}
with $\mathbf{e}$ an exponential random variable with parameter $1$.
\end{proposition}

This growth rate strongly contrasts with the exponential growth rate
satisfied by supercritical
branching processes with a finite mean offspring distribution; see the
Seneta--Heyde theorem.

\begin{pf*}{Proof of Proposition~\protect\ref{prop38} (after \cite{G77})}
We begin with general considerations on continuous-time branching processes.
The generating function $f_t(s)=\mathbb{E}(s^{L_1(t)})$ of $L_1(t)$
may be computed from the infinitesimal generating function $\phi(s)=
\varphi_{\mu}(s)-s$
using the partial differential equation
\[
\cases{\ds\partial_{t} f_t(s) =
\phi\bigl(f_t(s)\bigr),
\vspace*{3pt}\cr
\ds f_0(s)  = s,}
\]
see, for example, Harris \cite{H63}, Chapter V.
The function $f_t$ is a bijection from $[0,1]$ into itself,
with right-continuous inverse function $g_t$.
The process $(g_t(s)^{L_1(t)},  t \geq0)$ is Markov and has constant
expectation since
\[
\mathbb{E}\bigl(g_t(s)^{L_1(t)}\bigr)=f_t
\bigl(g_t(s)\bigr)=s.
\]
Therefore, it is a $[0,1]$-valued martingale that almost surely
converges towards a limiting random variable $V$
as $t \rightarrow\infty$.
At this point, we take advantage of the explicit formulas available in
our case:
\[
\phi(s)= \varphi_\mu(s)-s= (1-s) \log(1-s),
\]
which entails
\[
f_t(s)=1-(1-s)^{\mathit{e}^{-t}} \quad\mbox{and}\quad g_t(s)=1-(1-s)^{\mathit{e}^{t}}.
\]
We now compute, for $\alpha>0$,
\[
\mathbb{E}\bigl(g_t(s)^{\alpha L_1(t)}\bigr)= f_t
\bigl(g_t(s)^{\alpha}\bigr)= 1- \bigl(1- \bigl(1-(1-s)^{\mathit{e}^{t}}
\bigr)^\alpha \bigr)^{\mathit{e}^{-t}}.
\]
Taking the limit in $t$, and using the dominated convergence theorem
for the left-hand side, we find that the following expected value is
independent of $\alpha$:
\[
\mathbb{E}\bigl(V^\alpha\bigr)= s.
\]
This is possible only if $V$ is $\{0,1\}$-valued, equal to $1$ with
probability $s$.
Now, since $g_t(s)$ is increasing in $s$, there is a.s. a threshold
random variable
\[
U=\inf\Bigl\{s \in\mathbb Q \cap[0,1], \lim_{t \rightarrow\infty}
g_t(s)^{L_1(t)} = 1\Bigr\},
\]
which is uniformly distributed on $[0,1]$ since $\mathbb
{P}(U<s)=\mathbb{P}(V=1)=s$.
Then we form the logarithm of the expression $g_t(1-s)^{L_1(t)}$ and
use that
$\log g_t(1-s)$ is equivalent as $t \rightarrow\infty$ to
$g_t(1-s)-1$, itself
equal to $-s^{\mathit{e}^{t}}$ from the previous computation,
to deduce that
\[
\mbox{for }s <1-U, \qquad \lim_{t \rightarrow\infty} s^{\mathit{e}^{t}}
L_1(t)= 0
\]
and
\[
\mbox{for } s >1-U, \qquad  \lim_{t \rightarrow\infty}
s^{\mathit{e}^{t}} L_1(t)= \infty.
\]
Set $V=1-U$. The random variable $V$ is again uniformly distributed on $[0,1]$.
Taking again logarithm, for $\varepsilon>0$, we have
\[
-\log(V+\varepsilon) \leq\liminf_{t \rightarrow\infty} \mathrm{e}^{-t} \log
\bigl(L_1(t)\bigr) \leq \limsup_{t \rightarrow\infty}
\mathrm{e}^{-t} \log\bigl(L_1(t)\bigr) \leq-\log (V-\varepsilon),
\]
and the random variable $-\log(V)$ is exponentially distributed with
parameter $1$. This completes the proof.
\end{pf*}

The a.s. growth rate of the fixation line is the key to the following
estimate of the depth of the Bolthausen--Sznitman coalescent. There
exist other proofs in the literature, and we point the reader to the
one by Goldschmidt and Martin \cite{GM05} based on a connection with
recursive trees, and the one by
M\"ohle and Pitters \cite{MO14} based on a direct analytical approach.
Let us stress after \cite{MO14} that, once the distribution of $\tau
_1^n$ is explicitly known (which may be done using either a direct
computation or the aforementioned connection \cite{GM05}), it is a very
simple matter to derive the asymptotics of this distribution.
The interest of our approach lies in the connection with the (discrete)
Neveu branching process.


\begin{theo}
\label{depthBS}
In the Bolthausen--Sznitman case $\Lambda(dx)= \mathbf{1}_{[0,1]} (x)
  \,dx$,
we have the following convergence in distribution for the depth of the
$n$-coalescent:
%
\begin{equation}
\label{eqdepthBS}
\tau_1^n- \log\log(n) \Rightarrow- \log
\mathbf{e} \qquad \mbox{as } n \rightarrow \infty,
\end{equation}
where
$\mathbf{e}$ is an exponential random variable with parameter $1$.
\end{theo}

The random variable $-\log(\mathbf{e})$ is Gumbel distributed:
\[
\mathbb{P}\bigl(-\log(\mathbf{e}) \leq x\bigr)=\mathbb{P}\bigl(\mathbf{e}\geq
\mathrm{e}^{-x}\bigr)= \mathrm{e}^{-\mathrm{e}^{-x}},\qquad  x \in\mathbb R.
\]
The sequence $(\tau_1^n, n \geq1)$ evolves by independent exponential
jumps at the moments of records in the natural coupling; see
equation (\ref{eqrecord}). The convergence in distribution therefore
cannot be pushed to an a.s. convergence.
\begin{pf*}{Proof of Theorem~\protect\ref{depthBS}}
From the a.s. growth rate (\ref{asymptoticBS}) and the definition~(\ref{eqdeftaualpha}) of the hitting time $\alpha_1^n$,
we deduce
\[
\mathrm{e}^{-\alpha_1^n} \log n \leq \mathrm{e}^{-\alpha_1^n} \log L_1\bigl(
\alpha_1^n\bigr) \rightarrow\mathbf{e}\qquad \mbox{a.s. as } n
\rightarrow\infty,
\]
using the definition of $\alpha_1^n$ for the inequality and (\ref
{asymptoticBS}) for the almost sure convergence.
Therefore,
\[
\limsup_{n \rightarrow\infty} \log\log n -\alpha_1^n
\leq\log \mathbf{e}.
\]
Similarly, taking the left limit at $\alpha_1^n$ this time,
\[
\mathrm{e}^{-\alpha_1^n} \log n \geq \mathrm{e}^{-\alpha_1^n} \log L_1\bigl(
\alpha_1^n -\bigr) \rightarrow\mathbf{e}\qquad \mbox{a.s. as } n
\rightarrow\infty,
\]
and this implies
\[
\liminf_{n \rightarrow\infty} \log\log n-\alpha_1^n
\geq\log \mathbf{e}.
\]
This proves (\ref{eqdepthBS}) with
$\alpha_1^n$ instead of $\tau_1^n$.
We conclude using Lemma~\ref{tau-alpha}.
\end{pf*}


\begin{appendix}
\section*{Appendix}\label{app}

\begin{pf*}{Proof of Lemma~\ref{lemcombi}}
We perform the following calculations:
\begin{eqnarray*}
\tilde{\Gamma}_{i, \geq j} &=&  \sum_{k \geq j-i}
\pmatrix{k+i \cr k+1} \int_{[0,1]} \Lambda(dx) x^{-2}
x^{k+1} (1-x)^i
\\
&=& \int_{[0,1]} \Lambda(dx) x^{-2} \biggl[ \sum
_{k \geq j-i} \pmatrix{k+i \cr k+1} x^{k+1} \biggr]
(1-x)^i
\\
&=& \int_{[0,1]} \Lambda(dx) x^{-2} \biggl[
\frac{1}{(1-x)^i} - \sum_{0
\leq k \leq j-i} \pmatrix{k+i-1 \cr k}
x^{k} \biggr] (1-x)^i
\\
&=& \int_{[0,1]} \Lambda(dx) x^{-2} \biggl[ 1 - \sum
_{0 \leq k \leq
j-i} \pmatrix{k+i-1 \cr k} x^{k}
(1-x)^i \biggr]
\end{eqnarray*}
using the binomial theorem at the third equality. We also compute
\begin{eqnarray*}
\Lambda_{j, \leq i} &=& \sum_{j-i \leq k \leq j-1}
\pmatrix{j \cr k+1} \int_{[0,1]} \Lambda(dx) x^{-2} x^{k+1}
(1-x)^{j-(k+1)}
\\
&= & \int_{[0,1]} \Lambda(dx) x^{-2} \sum
_{j-i \leq k \leq j-1} \pmatrix{j \cr k+1} x^{k+1} (1-x)^{j-(k+1)}
\\
&=&  \int_{[0,1]} \Lambda(dx) x^{-2} \biggl[1- \sum
_{0 \leq k \leq j-i} \pmatrix{j \cr k} x^{k}
(1-x)^{j-k} \biggr]
\end{eqnarray*}
using the same theorem at the third equality again.
It is enough to prove that the two integrands are equal, which amounts
to verify
\[
\sum_{0 \leq k \leq j-i} \pmatrix{k+i-1 \cr k} x^{k} =
\sum_{0 \leq k \leq j-i} \pmatrix{j \cr k} x^{k}
(1-x)^{j-i-k}.
\]
But setting $\ell=j-i$ in the right-hand side, we obtain
\begin{eqnarray*}
\sum_{0 \leq k \leq j-i} \pmatrix{j \cr k} x^{k}
(1-x)^{j-i-k} &=&  \sum_{0 \leq k \leq\ell}
\pmatrix{\ell+ i \cr
k} x^{k} (1-x)^{\ell
-k}
\\
&=&  \sum_{0 \leq k + k' \leq\ell} \pmatrix{\ell+ i \cr k} \pmatrix{\ell-k \cr
k'} (-1)^{k'} x^{k+k'}.
\end{eqnarray*}
The claim therefore reduces to the following combinatorial statement:
%
\renewcommand{\theequation}{A.1}
\begin{equation}
\label{eqcombinatorial}
\pmatrix{n+i-1 \cr n} = \sum_{k + k' = n}
\pmatrix{\ell+ i \cr k} \pmatrix{\ell-k \cr k'} (-1)^{k'}\qquad \mbox{for } \ell\geq n.
\end{equation}
If $k+k'=n$, however, we have
\[
\pmatrix{\ell-k \cr k'} = (-1)^{k'} \pmatrix{n-\ell-1 \cr k'},
\]
and (\ref{eqcombinatorial}) reduces to
\[
\pmatrix{n+i-1 \cr n}= \sum_{k + k' = n} \pmatrix{\ell+ i \cr k}
\pmatrix{n-\ell-1 \cr k'}\qquad \mbox{for } \ell\geq n,
\]
a simple identity (also known as the Vandermonde identity).
\end{pf*}
\end{appendix}

\section*{Acknowledgments}

The author is grateful to Stephan Gufler, G\"otz Kersting, Iulia Stanciu, Anton Wakolbinger and
Linglong Yuan for their interest in this work. He also thanks the
referee and an Associate Editor for their careful reading.

\printaddresses
\end{document}